\documentclass[11pt]{article}
\usepackage{geometry}

 \usepackage{graphics}
 \usepackage{graphicx}
 \usepackage{epsfig}
 \usepackage[all]{xy}

 \usepackage{amssymb,amsmath,amsthm}

 \usepackage{latexsym, epsfig, psfrag,eepic,colordvi,bm}
 \usepackage{graphics,color}
 \usepackage[all]{xy}
 \usepackage{epstopdf}
 \usepackage{appendix}

 \usepackage{graphicx}

\usepackage{graphicx,epsfig}
\usepackage{algorithmic, algorithm}

\usepackage{caption,subcaption}

\newtheorem{theorem}{Theorem}[section]

\newtheorem{proposition}[theorem]{Proposition}

\newtheorem{definition}{Definition}[section]

\DeclareMathOperator{\Deg}{deg}
\newcommand{\tp}{$[{\bf t}, {\bf p}]$}

\title{Network pearls climb stairs: a new game on graphs \\and its optimal solution}
\author{Ricky X. F. Chen~\footnote{ORCID: 0000-0003-1061-3049. Email: chenshu731@sina.com}\\
}

\date{}

\begin{document}
\maketitle

{\bf\noindent Abstract.}
Let $G$ be a graph, and $a$ and $b$ be integers.
Suppose there are infinite number of stairs, numbering level $0$, level $1$, etc.
How do you place every vertex of the graph $G$ on a level as high as possible
such that a vertex placed on level $i$ should have at least $i-b$ neighbors among the vertices
placed on level $i$ and above while have at least $i$ neighbors among the vertices placed on level $i-a$ and above?
This new game on graphs is referred to as ``network pearls climb stairs".
In this paper, we develop a more general theory, notably a correspondence between structure and dynamics, which particularly leads to the optimal solution of the game.
Moreover, as $a$ and $b$ vary, the corresponding level numbers for a vertex obviously provide a structural profile (or spectrum)
for the vertex the applications of which are worthy of further study.

{\noindent \bf Keywords:}
game on graphs, graph sequence, network dynamical system

{\noindent \bf MSC 2020:} 05C57, 90C27


{\section{Introduction}\label{sec:introduction}}

The main purpose of the paper is to develop a theory leading to the optimal
solution to a new game on graphs.
The game is described as follows.
Suppose $G$ is a simple graph, and $a$ and $b$ are non-negative integers.
Suppose there are infinite number of stairs, numbering level $0$, level $1$, etc.
How do you place every vertex of the graph $G$ on a level as high as possible
such that a vertex placed on level $i$ should have at least $i-b$ neighbors among the vertices
placed on level $i$ and above while have at least $i$ neighbors among the vertices placed on level $i-a$ (treated as zero if negative) and above?
This new game on graphs is referred to as ``network pearls climb stairs" (abbr.~NPCS).
See an example in Figure~\ref{npcs-exam}.
Although we will not discuss any applications, some possible applications of the NPCS game may be briefly pointed out here.
The stair indices for the vertices of a graph obviously encode some structural information of the graph.
Also, if two vertices always ``climb" to the same level when $a$ and $b$ vary, then in some sense they may
have very similar ``position" in the graph. These observations may be useful to some network problems.

\begin{figure}[H]
	\centering
	\begin{subfigure}{0.25\linewidth}
		\centering
		\includegraphics[width=1.0\linewidth]{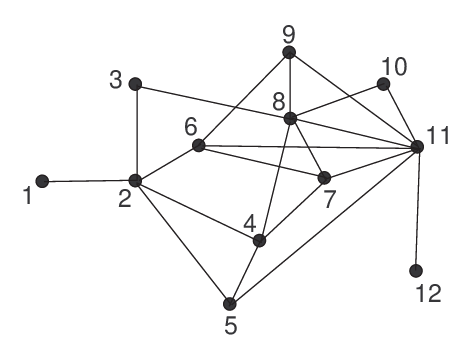}
		\caption{}
		\label{aa}
	\end{subfigure}\hskip -0pt
	\centering
	\begin{subfigure}{0.38\linewidth}
		\centering
		\includegraphics[width=1.0\linewidth]{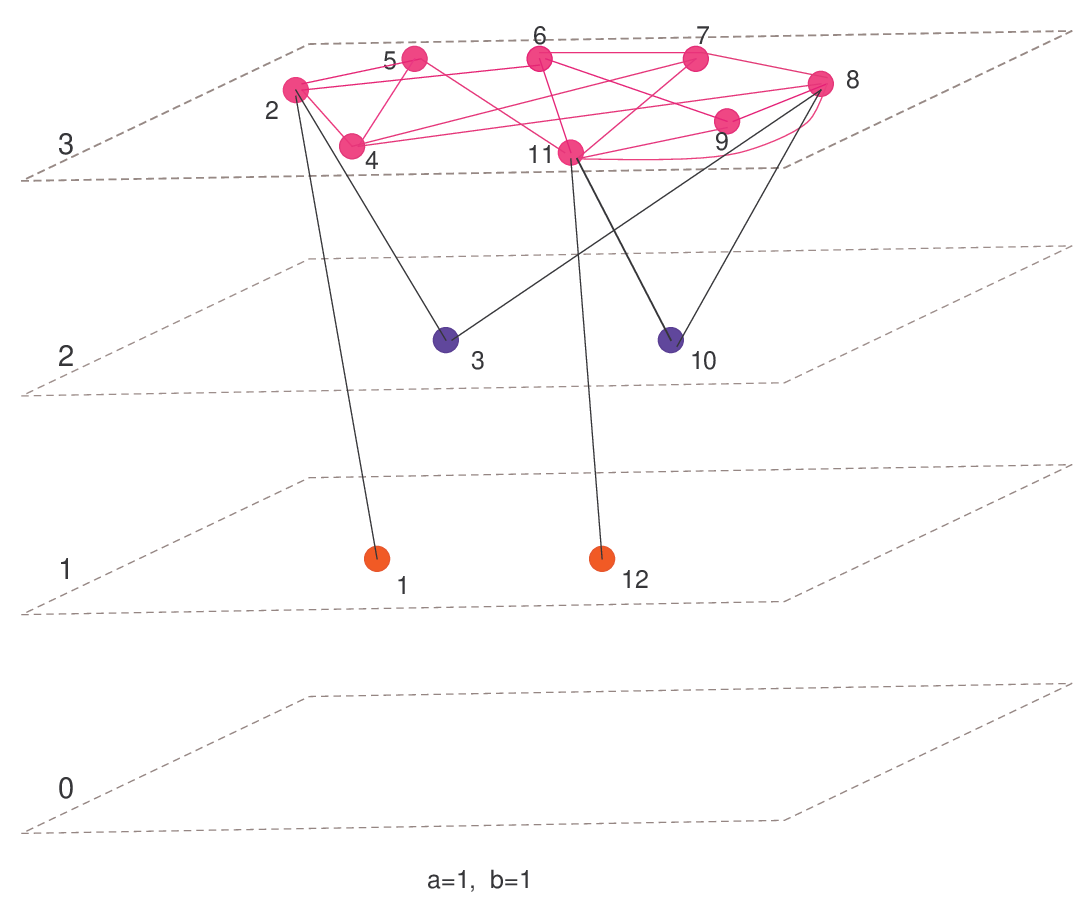}
		\caption{}
		\label{bb}
	\end{subfigure}	\hskip -14pt
	\centering
	\begin{subfigure}{0.38\linewidth}
		\centering
		\includegraphics[width=1.0\linewidth]{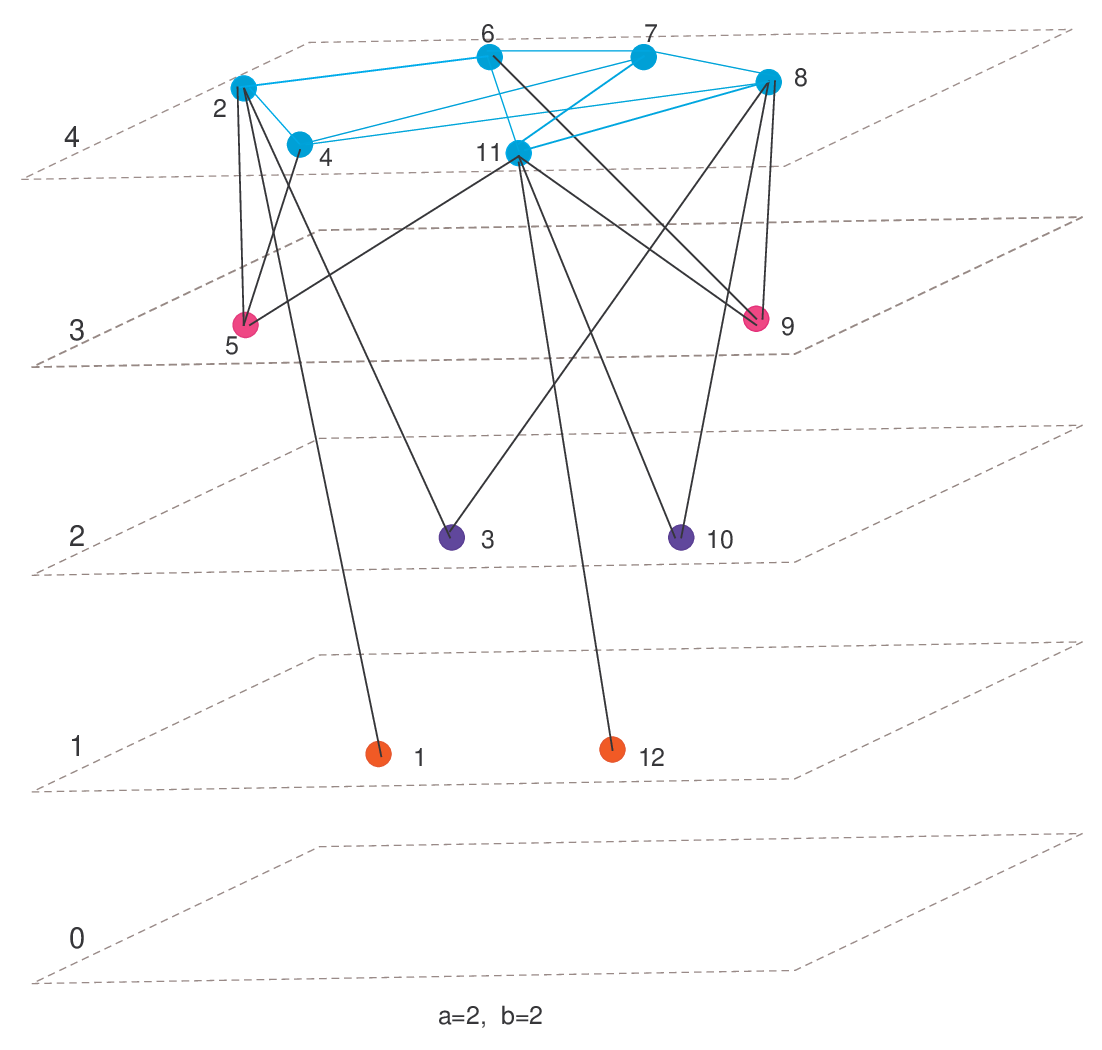}
		\caption{}
		\label{aaa}
	\end{subfigure}

	\caption{ (a) A graph $G$ of $12$ vertices. (b) The optimal solution of NPCS on $G$ with $a=1$ and $b=1$. (c) The optimal solution for
		$a=2$ and $b=2$.}
	\label{npcs-exam}
\end{figure}

The developed theory is a general new framework for analyzing graph structures, called \tp-filtered chain decomposition of graphs,
where $\bf{t}$ and $\bf{p}$ stand for certain function sequences.
Specifically, for a given graph $G$, each pair of such function sequences provides many subgraph sequences of $G$.
From such a subgraph sequence, every vertex of the given graph $G$ receives a rank. We next prove that certain extremal ranks for all vertices can be computed via searching fixed points (steady states) in certain dynamical systems on the graph.
As $\bf{t}$ and $\bf{p}$ vary, the corresponding extremal ranks for a vertex produces a high-resolution structural profile for the vertex.
Especially, the extremal ranks for some particular choice of $\bf{t}$ and $\bf{p}$ yields the desired optimal solution of the NPCS game.

The outline of the paper is as follows. In Section~$2$, we introduce the concept of \tp-filtered chain decomposition of graphs.
In Section~$3$, we present a novel means to compute the induced ranks of vertices from the chains.

\section{$[{\bf t},{\bf p}]$-filtered chain decomposition}

Let $G=(V,E)$ be a simple graph, where $V$ is the set of vertices (or nodes) and $E$ is its edge set.
We write $H\leq G$ if $H$ is a subgraph of $G$.
In the following, if not explicitly specified otherwise, a network of $n$ nodes has $[n]=\{1,2,\ldots, n\}$ as its node set. We sometimes write $v\in G$ for $v\in V(G)$.

\begin{definition}
	Let
	$
	{\bf t}=(t_1, t_2, \ldots, t_n),\, {\bf p}=(p_1, p_2, \ldots, p_n),
	$
	where $t_j$ and $p_j$ are integer-valued increasing functions defined over non-negative integers.
	Suppose $\mathcal{C}$ is a sequence of subgraphs of a network $G$ of $n$ nodes: $G_0, G_1 ,  G_2 ,  \ldots , G_m$. We call the sequence $\mathcal{C}$ a $[{\bf t},{\bf p}]$-filtered chain of $G$ if for any $0\leq i \leq m$, any vertex $v\in G_i$ has
	\begin{itemize}
		\item[(a)]  at least $p_v(i)$ neighbors in $G_i$, and 
		\item[(b)] at least $i$ neighbors in $G_j$ where $j=\max \{0, t_v(i)\}$.
	\end{itemize}
\end{definition}

 {\em Remark.} In this paper, given a graph $G$, when we say a vertex $v$ has $N$ neighbors in $G' \leq G$, we are referring to the adjacency relation in the original graph $G$ and $v$ is not necessarily contained in $G'$.
 No vertex is contained in an empty graph $\varnothing$ and any vertex has zero neighbors in $\varnothing$.

These chains aim at providing new insight into ``positions" of nodes in the global structure of the network $G$. Namely, we intend to characterize the internal mutual connection of a ``community" ($G_i$) in the global structure and the ability of the nodes there reaching the outside ($G_j$) of the community. 
An example of a \tp-filtered chain decomposition of a graph is presented in Figure~\ref{fig:tp-decomposition},
where $t_v(i)=i-1$ and $p_v(i)=i-1$ for any vertex $v$ of the graph.
	\begin{figure}[!htb]
	\centering
	\includegraphics[width=0.6\textwidth]{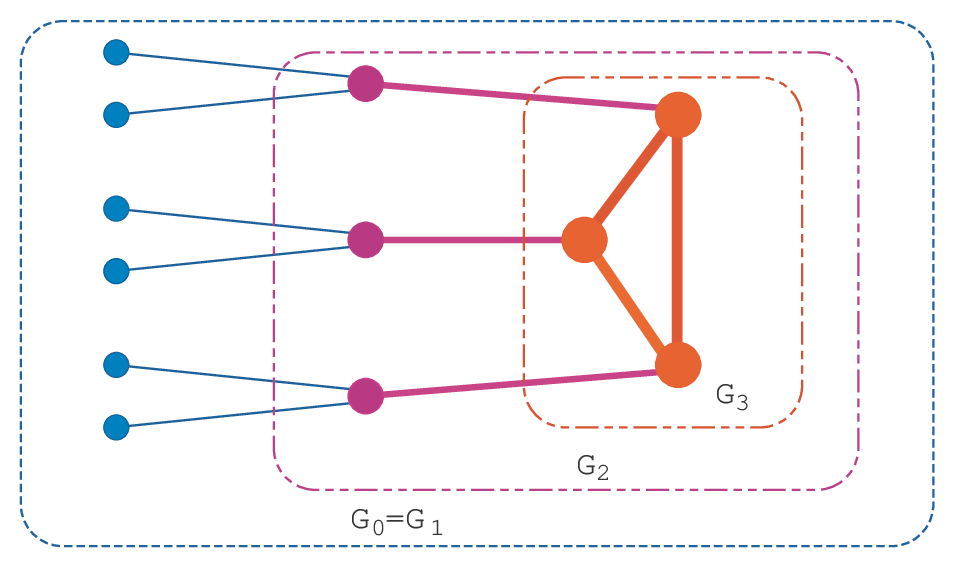}
	\caption{A \tp-filtered chain $G_0, G_1, G_2, G_3$ for a simple network $G=G_0$ of $12$ nodes and $12$ edges where $t_v(i)=i-1$ and $p_v(i)=i-1$ for any node $v$, i.e., a node contained in $G_i$ has at least $i-1$ neighbors in $G_i$ and at least $i$ neighbors contained in $G_j$ with $j=\max\{0, i-1\}$.}
	\label{fig:tp-decomposition}
\end{figure}

Note that given any \tp-filtered chain $G_0,  G_1, \ldots, G_m$ of $G$, the chain $G_0,  G_1, \ldots , G_m,  \varnothing , \ldots, \varnothing$
is also a \tp-filtered chain of $G$. Thus, given any \tp-filtered chain, there are many chains essentially the same as the given one.
In other words, some $\varnothing$'s are essentially necessary in order to rigorously satisfy the degree condition, some are not.
Due to this fact, it is necessary to distinguish the length and size of a chain.
For a \tp-filtered chain $\mathcal{C}: G_0,  G_1, \ldots, G_m$,
we call the number $m$ the length of the chain $\mathcal{C}$, and denote it by $L(\mathcal{C})=m$. The number $k$ such that $G_k\neq \varnothing$ while $G_{j}=\varnothing$ for all $j>k$ or $k+1>m$ is called the size of the chain $\mathcal{C}$, and denoted by $|\mathcal{C}|=k$.

For a node $v$ of $G$, we denote by $\Deg_G(v)$ (or $\Deg(v)$ if $G$ is clear) the degree of $v$ in $G$.
For some increasing function sequences ${\bf t}, {\bf p}$, there may be no \tp-filtered chains for $G$ at all.
The following proposition gives a sufficient and necessary condition for the existence of \tp-filtered chains.

\begin{proposition}[Existence]\label{prop:existance}
	Given $G=(V, E)$ and sequences of functions ${\bf t}, {\bf p}$,
	there exists a \tp-filtered chain of $G$ if and only if $p_v(0) \leq \Deg(v)$ for any $v\in V$.
\end{proposition}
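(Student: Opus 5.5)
The plan is to read the definition of a \tp-filtered chain as a \emph{decomposition} of $G$, so that the first term is the whole graph, $G_0=G$. This is how Figure~\ref{fig:tp-decomposition} uses it. It is also forced by the statement itself: if $G_0$ were allowed to be any subgraph, the one-term sequence $\varnothing$ would vacuously satisfy (a) and (b), and the condition $p_v(0)\le\Deg(v)$ would not be necessary. With this convention the proposition comes down to looking at level $i=0$ alone.

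\textbf{Necessity.} Suppose $G_0=G,G_1,\ldots,G_m$ is a \tp-filtered chain of $G$. Every vertex $v\in V$ lies in $G_0=G$. Condition (a) at $i=0$ says $v$ has at least $p_v(0)$ neighbors in $G_0=G$. The number of neighbors of $v$ in $G$ is $\Deg(v)$, so $p_v(0)\le \Deg(v)$ for every $v$.

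\textbf{Sufficiency.} Assume $p_v(0)\le\Deg(v)$ for all $v\in V$, and take the chain of length $0$ consisting of $G_0=G$ alone. Only $i=0$ has to be checked.
\begin{itemize}
\item Condition (a) requires at least $p_v(0)$ neighbors in $G$. This holds because $v$ has $\Deg(v)\ge p_v(0)$ of them.
\item Condition (b) requires at least $0$ neighbors in $G_j$ with $j=\max\{0,t_v(0)\}$. This is automatic, and it holds whatever $t_v(0)$ is, since $j$ is at least $0$ and at most the length of the chain.
\end{itemize}
Hence $G$ by itself is a \tp-filtered chain. By the remark after the definition, we may append any number of copies of $\varnothing$ to get chains of any length. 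The empty terms satisfy (a) and (b) vacuously, and the earlier terms keep the requirement "at least $i$ neighbors in $G_{\max\{0,t_v(i)\}}$" only as long as that index is within range.

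The only real obstacle is interpretive. One must fix the convention $G_0=G$, and check that the index $j=\max\{0,t_v(i)\}$ in (b) stays within the chain, which for $i=0$ is trivial because the requirement is $0$ neighbors. Once this is settled, both directions are one-line verifications, and no earlier results of the paper are needed.
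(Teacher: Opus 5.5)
Your proof is correct and essentially matches the paper's. Sufficiency uses $G_0=G$ followed by empty graphs, and necessity uses condition (a) under the convention $G_0=G$, which the paper's proof also assumes; the paper passes through some $G_i\ni v$ and the monotonicity of $p_v$, while you take $i=0$ directly, which is simpler. One small slip: for the length-$0$ chain, $j=\max\{0,t_v(0)\}$ need not be within range when $t_v(0)>0$, so rather than asserting it is, pad with sufficiently many copies of $\varnothing$ as the paper does, or simply note that requiring at least $0$ neighbors is vacuous.
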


\begin{proof}
	Consider the chain $G_0=G, \varnothing, \varnothing, \ldots $ where there are sufficient copies of the empty graph $\varnothing$. If $p_v(0)\leq \Deg(v)$, then $0 \leq \max\{0, 0+p_v(0)\} \leq \Deg(v)$. Then it is clear that for any $v\in G_0$, there are at least zero neighbors of $v$ in $G_j$ for any $j\geq 0$ (in particular for $j= \max \{0, t_v(0)\}$) and at least $p_v(0)$ neighbors of $v$ in $G_0$.
	Since $G_i=\varnothing$ for $i>0$, there is nothing to check and the degree condition is automatically satisfied. Thus the chain is a \tp-filtered chain.
	
	Conversely, suppose $G_0=G, G_1, \ldots, G_m$ is a \tp-filtered chain. Suppose vertex $v$ is contained in $G_i$ for some $i\geq 0$.
	Then, by definition, there must be at least $p_v(i)$ neighbors of $v$ in $G_i$. Since $G_i \leq G$, we have $p_v(i) \leq \Deg(v)$, which implies $p_v(0) \leq \Deg(v)$ due to $p_v$ being an increasing function. This completes the proof.
\end{proof}

Hereafter, we assume $p_v(0) \leq \Deg(v)$ for any node $v$ unless they are
explicitly specified.
When exist, there may be many \tp-filtered chains of a given graph.
We will not explore the whole space of these chains in this paper, but
focus on some special chains.

\begin{definition}[Maximal chains]
	A \tp-filtered chain $\mathcal{C}$ of $G$: $G_0, G_1, \ldots, G_m$ is called maximal if there does not exist a \tp-filtered chain $\mathcal{C}': G'_0, G'_1, \ldots, G'_{m'}$ satisfying either (i) $|\mathcal{C}'|> |\mathcal{C}|$, or (ii) $|\mathcal{C}'|= |\mathcal{C}|$ and for some $ 1\leq i \leq |\mathcal{C}|$, $G_i < G'_i$.
\end{definition}

Note that merging two \tp-filtered chains entry-wisely gives another \tp-filtered chain.
Therefore, it is not hard to show the following crutial uniqueness result.

\begin{proposition}[Uniqueness]
	For $G=(V, E)$, 
	 there exists a unique maximal \tp-filtered chain of $G$ whenever $p_v(0) \leq \Deg(v)$ for any $v\in V$.
\end{proposition}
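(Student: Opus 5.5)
Uniqueness will come from closure of \tp-filtered chains under entry-wise union; existence from the finiteness of $G$.

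\textbf{Step 1: union closure.} Let $\mathcal{C}: G_0,\ldots,G_m$ and $\mathcal{C}': G'_0,\ldots,G'_{m'}$ be \tp-filtered chains. Pad the shorter one with copies of $\varnothing$ so that both have length $M=\max\{m,m'\}$; as noted in the text, this keeps both filtered. Set $H_i=G_i\cup G'_i$, the subgraph of $G$ induced on $V(G_i)\cup V(G'_i)$, or simply the union of vertex sets with adjacency read in $G$. Since the Remark says neighbor counts always refer to adjacency in $G$, only the vertex sets matter.

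Take $v\in H_i$, and say without loss of generality $v\in G_i$. Then $v$ has at least $p_v(i)$ neighbors in $G_i\subseteq H_i$, and at least $i$ neighbors in $G_j\subseteq H_j$ with $j=\max\{0,t_v(i)\}$. The index $j$ depends only on $v$ and $i$, not on the chain, so it is the same index in both chains. Hence $H_0,\ldots,H_M$ is a \tp-filtered chain. Its size is $\max\{|\mathcal{C}|,|\mathcal{C}'|\}$, and it contains each $G_i$ and each $G'_i$ entry-wise.

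\textbf{Step 2: existence of a maximal chain.} By Proposition~\ref{prop:existance}, the hypothesis $p_v(0)\le\Deg(v)$ guarantees that at least one chain exists. Next I bound sizes. I would argue that every vertex in $G_i$ with $i\ge1$ needs $i$ neighbors, so $i\le\Deg(v)\le n-1$. Hence sizes are bounded by $n-1$, and the maximum size $k$ is attained.

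Among chains of size $k$, consider the entry-wise union of all of them. There are only finitely many, since after truncating the trailing $\varnothing$'s each entry is one of finitely many vertex subsets. By Step 1, iterated, this union $\mathcal{C}^*$ is a \tp-filtered chain. It still has size exactly $k$: it is at least $k$, and it is at most $k$ because $k$ is the maximum.

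\textbf{Step 3: maximality and uniqueness.} $\mathcal{C}^*$ is maximal. No chain has larger size, and any chain $\mathcal{C}'$ of size $k$ satisfies $G'_i\subseteq G^*_i$ for every $i$, so we never have $G^*_i<G'_i$.

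For uniqueness, let $\mathcal{D}$ be another maximal chain. Condition (i) of maximality forces $|\mathcal{D}|=k$, so $D_i\subseteq G^*_i$ for all $i$. If some inclusion were strict with $1\le i\le k$, then $\mathcal{C}^*$ would violate condition (ii) for $\mathcal{D}$. So $D_i=G^*_i$ for $1\le i\le k$.

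Two bookkeeping points remain.
\begin{itemize}
\item Index $0$ lies outside the range of condition (ii). I would handle it by showing that $G^*_0=G$ is forced, since any chain can have its $0$th entry replaced by $G$ while staying filtered. I would then identify chains up to padding by $\varnothing$'s and up to the choice of $G_0$, as the paper implicitly does.
\item Chains differing only in trailing $\varnothing$'s are identified, consistent with the paper's distinction between length and size.
\end{itemize}

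The main obstacle is this identification, together with the case where $G_0$ could differ. For the latter I would check carefully that enlarging $G_0$ to $G$ preserves condition (b) for vertices $v$ whose $t_v(i)\le 0$. It does, because enlarging $G_0$ only adds neighbors, and condition (a) at level $0$ holds for all of $G$ by the hypothesis $p_v(0)\le\Deg(v)$.
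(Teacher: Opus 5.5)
Your proof is correct and follows the same route as the paper. The paper only sketches the argument: entry-wise merging of two \tp-filtered chains is again a \tp-filtered chain, and uniqueness follows. You fill in the missing details, namely the size bound $k\le \max_v \Deg(v)$ from condition (b), finiteness up to trailing $\varnothing$'s, and taking the union of all chains of maximum size.

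Your bookkeeping remark is also warranted. Condition (ii) of maximality ignores index $0$, and chains can be padded with $\varnothing$'s, so uniqueness holds only after identifying chains up to trailing $\varnothing$'s and normalizing $G_0=G$, exactly as you do.
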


Moreover, we have a nesting property for the maximal \tp-filtered chain.

\begin{proposition}[Nesting property]
	For $G=(V, E)$, suppose $G_0, G_1, \ldots, G_m$ is the 
	maximal \tp-filtered chain of $G$. Then, we have $G_0 \geq G_1 \geq \ldots \geq G_m$ and $G_0=G$.
\end{proposition}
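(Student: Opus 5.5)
Both assertions follow from maximality, by showing that a chain violating either of them could be enlarged while staying a \tp-filtered chain.

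\emph{Step 1: $G_0=G$.} Suppose $G_0\neq G$ and replace $G_0$ by $G$, leaving all $G_i$ with $i\ge 1$ unchanged. Conditions (a) and (b) for a vertex $v\in G$ at level $0$ ask for at least $p_v(0)$ neighbors in $G$, which holds by the standing assumption $p_v(0)\le \Deg(v)$. They also ask for at least $0$ neighbors in $G_j$, which is trivial. For a vertex of $G_i$ with $i\ge 1$, condition (a) does not involve $G_0$. Condition (b) refers to $G_j$ with $j=\max\{0,t_v(i)\}$. If $j=0$, the number of neighbors in $G_0$ can only increase when $G_0$ is enlarged. So the modified sequence is still a \tp-filtered chain, and it has the same size. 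Caveat: if $G_0$ were the only nonempty entry, then with $G_0=\varnothing$ the size should be read as $0$. Since $G_0<G$ while the new entry at level $0$ is $G$, this contradicts maximality. The definition of maximality only tests $1\le i\le |\mathcal{C}|$. So I would either use the uniqueness proposition together with the merging remark (merge $\mathcal{C}$ entrywise with the chain $G,\varnothing,\ldots$ from the existence proof; the merge is again a chain and contains $\mathcal{C}$), or argue directly that the unique maximal chain must contain this merge. In either case I would conclude $G_0=G$.

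\emph{Step 2: nesting.} Define $G'_i=\bigcup_{k\ge i} G_k$ (union of subgraphs, i.e.\ union of vertex and edge sets) for $0\le i\le m$. Clearly $G'_0\ge G'_1\ge\cdots$, $G'_i\ge G_i$, and $|\mathcal{C}'|=|\mathcal{C}|$. I claim $\mathcal{C}'$ is a \tp-filtered chain. Take $v\in G'_i$; then $v\in G_k$ for some $k\ge i$.
\begin{itemize}
\item[(a)] Since $p_v$ is increasing, $v$ has at least $p_v(k)\ge p_v(i)$ neighbors in $G_k\le G'_i$.
\item[(b)] $v$ has at least $k\ge i$ neighbors in $G_{j_k}$ with $j_k=\max\{0,t_v(k)\}$. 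Because $t_v$ is increasing, $j_k\ge j_i:=\max\{0,t_v(i)\}$, hence $G_{j_k}\le G'_{j_i}$. So $v$ has at least $i$ neighbors in $G'_{j_i}$.
\end{itemize}
Thus $\mathcal{C}'$ is a chain of the same size dominating $\mathcal{C}$ entrywise. By maximality (or by uniqueness plus merging, since merging $\mathcal{C}$ with $\mathcal{C}'$ gives $\mathcal{C}'$), $G_i=G'_i$ for all $i$, which yields $G_0\ge G_1\ge\cdots\ge G_m$.

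The main obstacle is the bookkeeping around the definition of maximality: indices beyond $|\mathcal{C}|$ (trailing $\varnothing$'s, which are trivially nested) and the index $0$, which the definition does not cover. I would handle the index-$0$ case through the merging argument. A secondary point is that ``neighbors in $G'$'' counts vertices of $G'$ adjacent in $G$, so the union must be taken on vertex sets; the monotonicity of the counts then holds because the vertex sets are nested.
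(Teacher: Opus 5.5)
Your proof is correct and follows the paper's route. It uses the same union chain $G'_i=\bigcup_{k\ge i}G_k$, the same monotonicity checks of $p_v$ and $t_v$ for conditions (a) and (b), and the same replacement of level $0$ by $G$. Your extra care at index $0$ is well placed: the maximality definition does not literally test $i=0$, and the paper passes over this silently, whereas your fix of applying uniqueness to the chain merged with $G,\varnothing,\ldots$ is sound.
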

\begin{proof}
	Suppose $G_0, G_1, \ldots, G_m$ is a \tp-filtered chain of $G$.
	For $i\geq 0$, let 
	$$
	G'_i=\bigcup_{j\geq i} G_j.
	$$
	The sequence of $G'_i$ obviously has the nesting property.
	It then suffices to show that $G'_0, G'_1,
	\ldots, G'_m$ and $G, G'_1,
	\ldots, G'_m$ are also \tp-filtered chains of $G$.
	Suppose $v\in G'_i$. We distinguish two cases to prove the former first.
	If $v \in G_i$, by construction, $v$ has at least $i$ neighbors in $G_j \leq G'_j$ thus in $G'_j$
	while has at least $p_v(i)$ neighbors in $G_i \leq G'_i$ thus in $G'_i$.
	If $v \notin G_i$, then $v \in G_k$ for some $k>i$; Otherwise, it is impossible.
	By construction, $v$ has at least $k$ neighbors in $G_j$ for $j=\max\{0, t_v(k)\} \geq \max\{0, t_v(i)\}=j'$.
	Since $G'_{j'} \geq G'_j \geq G_j$ and $i<k$, $v$ has at least $i$ neighbors in $G'_{j'}$. On the other hand,
	$v$ has at least $p_v(k)$ neighbors in $G_k$ by construction. Since $G'_i \geq G'_k \geq G_k$ and $p_v(i)\leq p_v(k)$,
	we conclude that $v$ has at least $p_v(i)$ neighbors in $G'_i$. 
	In summary, $G'_0, G'_1,
	\ldots, G'_m$ is a \tp-filtered chain of $G$. Next, replacing $G'_0$ with the graph $G$ itself obviously yields a \tp-filtered chain, completing the proof.
\end{proof}

Note that the nesting property is not necessary for a chain to be a \tp-filtered chain.
We also remark that the maximal D-chain (dendrite chain) of order $t\leq 0$ of a network $G$ introduced by Chen, Bura and Reidys~\cite{d-spec}
is equivalent to the maximal \tp-filtered chain of $G$ with $t_v(i)=i+t$ and $p_v(i)=0$ for all $v\in G$.

In the maximal \tp-filtered chain, if a vertex $v\in G_k$ and $v\notin G_{k+1}$, then we denote $C_{\bf t, p}(v)=k$.
That is, the maximal \tp-filtered chain induces a unique rank for every vertex. 
Apparently, $G_k$ is just the vertex-induced subgraph of $G$ by the set of vertices $v$ with $C_{{\bf t,p}}(v)=k$.

Finally, we relate to the NPCS game introduced at the beginning of the paper.
For a given graph $G$ and $a, b$ being non-negative integers,
let $G_i$ denote the subgraph induced by the vertices placed on level $i$
and above in a solution of the game.
We claim that the NPCS game has a unique optimal solution, and the sequence
$G_0, G_1, \ldots$ from the optimal solution is exactly the maximal \tp-filtered chain
of $G$ for $t_v(i)=i-a$ and $p_v(i)=i-b$ for any $v\in G$.
The proof of the claim is left to the interested reader.
Consequently, to find the optimal solution of the NPCS game is to compute
 $C_{\bf t, p}(v)$ for all $v \in G$ which
 will be discussed in the forthcoming section.

\section{Computation}

Inspecting the definition of \tp-filtered chains, it is not clear how to obtain such chains.
In fact, we do not know any efficient approach for obtaining all chains at present.
However, we do find an approach towards obtaining the maximal \tp-filtered chain if exists.
The approach is based on a novel connection discovered between the maximal \tp-filtered chain
of a graph $G$ and a certain fixed point of some network dynamical system on $G$.

A network dynamical system~\cite{linear2,linear1,kauf,vonn,wolf,rei5} is concerned with
the dynamics generated when nodes in the network (i.e., graph) update their states following a system update schedule and their respective rules.
Von Neumann's cellular automata~\cite{vonn} are such dynamical systems.

Let $P$ denote a finite set of states a node in a graph $G$ of $n$ vertices may have and let $x_i$ denote the state of node $i$.
Then, a system state is a vector $(x_1, x_2, \ldots, x_n) \in P^n$.
For any vertex $i$, suppose there is
a (local) function $f_{i}$ with the states of the neighbors of $i$
(and itself) in $G$ as input arguments.
An update schedule is
an infinite sequence of subsets of vertices $W=W_1W_2\cdots$.
Suppose the initial system state at time $t=0$ is ${\bf x}^{(0)}=(x^{(0)}_1, x^{(0)}_2, \ldots, x^{(0)}_n)$.
For $j>0$, the system state ${\bf x}^{(j)}$
at time $t=j$ follows from that the nodes contained in $W_j$ update their states via
applying their respective associated functions to the states of their respective neighbors (and themselves) in ${\bf x}^{(j-1)}$
while the states of the nodes not contained in $W_j$ stay unchanged.
We denote this dynamical system by $[G,f,W]$, and we denote by
$[G,f,W]^{(j)}({\bf x})$ the system state at time $t=j$ when the system starts
from the initial state ${\bf x}$ at time $t=0$.

({\bf Fixed point}) A system state ${\bf z} \in P^n$ is called a fixed point (or steady state) if the state of vertex $i$ remains changed when
updating the state of vertex $i$ from ${\bf z}$ by applying the function $f_i$ for any $i$.
Note that our definition of fixed points does not depend on the choice of update schedules.
Obviously, for any update schedule $W$, if ${\bf z}$ is a fixed point, then there exists a system state ${\bf x}$
and $k> 0$ such that for any $j>k$,
$$
[G,f,W]^{(j)}({\bf x})=[G,f,W]^{(k)}({\bf x})={\bf z}.
$$
In this case, we say ${\bf x}$ is
reaching the fixed point ${\bf z}$ in the dynamical system $[G,f,W]$.

Suppose there is a linear order `$\leq $' on $P$. Let $P^q=\{(x_1,x_2,\dots, x_q): x_j \in P,\, 1\leq j \leq q\}$. Then, 
there is a natural partial order on $P^q$ as follows:  $(x_1,x_2,\dots , x_q)\leq (y_1,y_2,\dots, y_q)$ if for all $1\leq j \leq q$, $x_j\leq y_j$ in $P$.
A function $g: P^q\rightarrow P$ is called monotonically increasing, if for any ${\bf x\leq y}$ in $P^q$, we have $g(x)\leq g(y)$ in $P$.
Fox example, the binary functions `AND' and `OR' on $P^q=\{0,1\}^q$ are monotonically increasing.

The local function $f_{i}: (x_i, x_{k_1},x_{k_2},\dots, x_{k_i})\mapsto x'_i$ is called contractive (where
$k_1,\ldots, k_i$ are the neighbors of $i$), if
for any argument $(x_i, x_{k_1},x_{k_2},\dots, x_{k_i})\in P^{k_i+1}$, $x'_i \leq  x_i$ holds.
It is easy to check that if $f_{i}$ is the binary function `AND', then it is contractive under the assumption $0<1$.
A dynamical system where the local function associated with every vertex is monotonically increasing and contractive is called a monotone-contractive (M-C) system.

An update schedule $W=W_1 W_2 \cdots$ is called fair if for any $k>0$ and any $v\in G$,
there exists $l>k$ such that $v\in W_l$. 
For example, if $W_i=[n]$ for any $i$, then $W$ is fair; if each $W_j$ contains a single vertex and $W_{i+1}W_{i+2}\cdots W_{i+n}$
is a permutation on the vertex set $[n]$ for any $i$, then $W$ is fair.
The following results are relevant.

\begin{proposition}[\cite{d-spec}]\label{main-thm-2}
	Suppose the system state ${\bf x} \in P^n$ is reaching the fixed point ${\bf z} \leq {\bf x}$ in a dynamical system with monotonically increasing local functions. Then,
	any state ${\bf y} \in P^n$ such that ${\bf z} \leq {\bf y} \leq {\bf x}$  or ${\bf y}< {\bf x}$ but not comparable to ${\bf z}$ is not a fixed point of the dynamical system.
\end{proposition}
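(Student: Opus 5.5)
The plan is a comparison argument: run the dynamical system from ${\bf y}$ and from ${\bf x}$ under the same update schedule $W$ and compare the two trajectories coordinatewise. Monotonicity of the local functions should make the order ${\bf y}\leq {\bf x}$ persist for all time. If ${\bf y}$ were a fixed point, its trajectory would stay at ${\bf y}$ while the trajectory of ${\bf x}$ ends at ${\bf z}$, forcing ${\bf y}\leq {\bf z}$. This excludes both cases in the statement. In the first case ${\bf y}$ must be understood as different from ${\bf z}$, since ${\bf z}$ itself is trivially a fixed point with ${\bf z}\leq{\bf z}\leq{\bf x}$.

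\emph{Step 1 (order preservation).} Fix the schedule $W=W_1W_2\cdots$ along which ${\bf x}$ reaches ${\bf z}$. I will show by induction on $j$ that ${\bf y}\leq{\bf x}$ implies
$$[G,f,W]^{(j)}({\bf y})\leq [G,f,W]^{(j)}({\bf x})$$
for every $j\geq 0$. The case $j=0$ is the hypothesis. For the inductive step, a vertex $i\notin W_{j}$ keeps its state in both systems, so the inequality in coordinate $i$ is inherited. For $i\in W_j$, the new state is $f_i$ applied to the states of $i$ and its neighbors at time $j-1$. These argument tuples satisfy the coordinatewise inequality by the induction hypothesis, so monotonicity of $f_i$ gives the inequality at time $j$.

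\emph{Step 2 (fixed trajectory).} Suppose ${\bf y}$ is a fixed point. By definition, applying any $f_i$ to ${\bf y}$ leaves coordinate $i$ unchanged. Hence, by induction on $j$ (and independently of $W$), $[G,f,W]^{(j)}({\bf y})={\bf y}$ for all $j$. Since ${\bf x}$ reaches ${\bf z}$, there is $k$ with $[G,f,W]^{(j)}({\bf x})={\bf z}$ for all $j>k$. Combining this with Step 1 for $j>k$ yields ${\bf y}\leq{\bf z}$.

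\emph{Step 3 (conclusion).} In the first case we have ${\bf z}\leq{\bf y}$, so together with ${\bf y}\leq{\bf z}$ antisymmetry of the product order gives ${\bf y}={\bf z}$. Thus no fixed point other than ${\bf z}$ lies in the interval between ${\bf z}$ and ${\bf x}$. In the second case, ${\bf y}<{\bf x}$ is assumed incomparable to ${\bf z}$, which contradicts ${\bf y}\leq{\bf z}$.

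The only delicate point is the bookkeeping in Step 1. Agents not in $W_j$ must be handled separately, and the local function $f_i$ takes the state of $i$ itself as well as its neighbors. The argument also has to be run along the same schedule $W$ for which ${\bf x}$ reaches ${\bf z}$. Once that is set up, the rest is immediate, so I expect no genuine obstacle beyond stating the implicit exclusion ${\bf y}\neq{\bf z}$.
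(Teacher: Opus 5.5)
Your proof is correct and complete. You run ${\bf y}$ and ${\bf x}$ along the same schedule $W$, and monotonicity of each $f_i$ keeps the two trajectories ordered; this holds because $f_i$ is applied to the states of $i$ and its neighbors at time $j-1$, and vertices outside $W_j$ are handled separately. Since a fixed point has a constant trajectory, you get ${\bf y}\le{\bf z}$, which rules out both cases. You are also right that the first case must be read with ${\bf y}\neq{\bf z}$, since ${\bf z}$ itself is a fixed point lying in that interval.

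The paper gives no proof of this proposition; it is imported from \cite{d-spec}, so there is no in-paper argument to compare against. Note that you actually prove something stronger: every fixed point ${\bf y}\le{\bf x}$ satisfies ${\bf y}\le{\bf z}$, i.e.\ ${\bf z}$ is the greatest fixed point below ${\bf x}$. You do this without using the hypothesis ${\bf z}\le{\bf x}$. This stronger form is exactly what the proof of Theorem~\ref{thm:main3} needs when it concludes that ${\bf d}$ cannot stabilize strictly below the fixed point ${\bf z}$.
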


\begin{proposition}[\cite{d-spec}]\label{main-thm-1}
	For any two fair update schedules $W$ and $W'$, a system state ${\bf x} \in P^n$ is reaching the same fixed point ${\bf z}\leq {\bf x}$ under the two M-C systems
	$[G,f,W]$ and $[G,f,W']$.
	Furthermore, any system state ${\bf y} \in P^n$ such that ${\bf z}\leq {\bf y} \leq {\bf x}$ is reaching the fixed point ${\bf z}$.
\end{proposition}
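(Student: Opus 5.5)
The statement to prove is Proposition~\ref{main-thm-1}. The plan is to compare the trajectory of ${\bf x}$ under $W$ with its trajectory under $W'$, using two features of an M-C system. Contractivity makes every trajectory non-increasing. Monotonicity lets us compare trajectories started from comparable states.

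\emph{Step 1: trajectories stabilise.} Because every $f_i$ is contractive, each coordinate of $[G,f,W]^{(j)}({\bf x})$ is non-increasing in $j$. Since $P$ is finite, the trajectory is eventually constant, say equal to ${\bf z}$ for all $j\geq k$. Fairness of $W$ guarantees that every vertex is updated at some time after $k$. Hence $f_i({\bf z})=z_i$ for every $i$, so ${\bf z}$ is a fixed point, and ${\bf z}\leq {\bf x}$. Let ${\bf z}'$ denote the fixed point reached from ${\bf x}$ under $W'$; the same argument applies to it.

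\emph{Step 2: key comparison lemma.} Let ${\bf u}$ be any fixed point with ${\bf u}\leq {\bf x}$. I claim ${\bf u}\leq [G,f,W]^{(j)}({\bf x})$ for every $j$, whatever the schedule. The proof is by induction on $j$. Suppose ${\bf u}\leq {\bf x}^{(j-1)}$. For a vertex $i\in W_j$, monotonicity gives $x^{(j)}_i=f_i({\bf x}^{(j-1)})\geq f_i({\bf u})=u_i$. Vertices not in $W_j$ keep their state and so keep the inequality.

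\emph{Step 3: equality of the two limits.} Apply Step 2 with ${\bf u}={\bf z}'$ and the schedule $W$. Passing to the limit gives ${\bf z}'\leq {\bf z}$. Symmetrically, ${\bf z}\leq {\bf z}'$, so ${\bf z}={\bf z}'$. In fact this shows ${\bf z}$ is the greatest fixed point below ${\bf x}$, which is consistent with Proposition~\ref{main-thm-2}.

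\emph{Step 4: the second claim.} Take ${\bf y}$ with ${\bf z}\leq {\bf y}\leq {\bf x}$. By Step 1, ${\bf y}$ reaches some fixed point ${\bf w}\leq {\bf y}\leq {\bf x}$. By Step 2 applied to ${\bf y}$, and since ${\bf z}\leq{\bf y}$, we get ${\bf z}\leq {\bf w}$. By Step 2 applied to ${\bf x}$ with ${\bf u}={\bf w}$, we get ${\bf w}\leq {\bf z}$. Hence ${\bf w}={\bf z}$. An alternative route is to compare trajectories step by step: by induction and monotonicity, $[G,f,W]^{(j)}({\bf y})\leq[G,f,W]^{(j)}({\bf x})$, which gives ${\bf w}\leq{\bf z}$ directly.

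The main obstacle is Step 1. The statement implicitly presupposes that a fixed point is reached at all, and this uses both finiteness of $P$ and fairness. Without fairness, a vertex might never be updated after the trajectory freezes, and the limit need not be fixed. The remaining steps are routine inductions based on monotonicity.
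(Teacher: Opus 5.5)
Your proof is correct and complete. Step~1 properly uses contractivity, finiteness of $P$ and fairness to show that the trajectory freezes at a genuine fixed point, which the statement tacitly presupposes. Steps~2--4 are sound inductions from monotonicity.

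There is no in-paper argument to compare with. The paper does not prove this proposition; it simply quotes it from \cite{d-spec}.

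Your route has some advantages worth noting.
\begin{itemize}
\item Your key lemma (Step~2) uses only monotonicity, not contractivity. It shows that the limit ${\bf z}$ is the \emph{greatest fixed point below} ${\bf x}$. That characterization is independent of the schedule, so both claims of the proposition follow at once.
\item The same lemma also yields Proposition~\ref{main-thm-2}, reading its first case as ${\bf z}<{\bf y}\leq{\bf x}$, since ${\bf z}$ itself is a fixed point. Indeed, any fixed point ${\bf y}\leq{\bf x}$ must satisfy ${\bf y}\leq{\bf z}$. Hence it can be neither strictly above ${\bf z}$ nor incomparable to it.
\item This is exactly the principle the paper uses, in two pieces, in the proof of Theorem~\ref{thm:main3}. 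There it rules out fixed points ${\bf y}\leq{\bf d}$ that are not below ${\bf z}$, then appeals to Proposition~\ref{main-thm-2} to exclude smaller limits. With your formulation this becomes one line: ${\bf d}$ reaches the greatest fixed point ${\bf w}$ below ${\bf d}$. Since ${\bf z}$ is a fixed point below ${\bf d}$, we get ${\bf z}\leq{\bf w}$, and the exclusion argument forces ${\bf w}={\bf z}$.
\item The lemma remains valid for the $[{\bf t},{\bf p}]$-system, which is contractive only on the reachable set $\mathcal{Q}$. Step~2 never uses contractivity, and Step~1 only needs it along the trajectory.
\end{itemize}
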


As a consequence of Proposition~\ref{main-thm-1}, the exact form of the update schedule does not matter when come to discussing fixed points in M-C systems as long as it is fair. Therefore,
we shall not explicitly specify the update schedules of M-C systems unless it is necessary.

({\bf $[{\bf t},{\bf p}]$-system})  Let $G$ be a graph on vertices in $[n]$.
Given a pair of integer-valued increasing function sequences ${\bf t}=(t_1,\ldots, t_n)$ and ${\bf p}=(p_1,\ldots, p_n)$, we define a particular dynamical system on $G$, called the $[{\bf t},{\bf p}]$-system on $G$, as follows:
each vertex of $G$ can have a state from the set $[n]$, and the function $f_v$ at a vertex $v$ returns the maximum $k \geq 0$ such that there are $k$ of the neighbors of $v$ with (state) values at least $t_v(k)$ while $p_v(k)$ of them with values at least $k$, and a specified update schedule $W$.

Here is an example of how the local functions work. Suppose for $i\geq 0$, $t_v(i)=i-2$ and $p_v(i)=i-1$, and $\{2,4,4,5,3\}$
is the (multi)set of values of the neighbors of $v$. Then,
$f_v$ returns $4$, i.e., $v$ has $4$ neighbors with values at least $4-2=2$ and has $4-1=3$ neighbors
with values at least $4$, and the number $4$ cannot be further increased to $5$ or even larger.

It turns out that these systems are useful in computing maximal chains,
and $W$ can be an arbitrary fair update schedule thus sometimes not specified.

\begin{proposition}
Suppose $G$ is a graph on $[n]$ and ${\bf d}=\big(\Deg(1),\Deg(2),\dots, \Deg(n)\big)$.
Given function sequences ${\bf t}$ and ${\bf p}$,
let 
$$
\mathcal{Q}=\bigcup_{W} \{z: z=[G,f,W]^{(i)}({\bf d}) \mbox{ for some $i\geq 0$}\},
$$
where the union is taken over $[{\bf t},{\bf p}]$-systems $[G,f,W]$ for all possible fair update schedules $W$.
Then, for any $v\in G$, its associated function $f_v$ is contractive with respect to any system state
contained in $\mathcal{Q}$.
Moreover, for all fair update schedules $W$, all states in $\mathcal{Q}$ will reach the same fixed point in the \tp-systems $[G,f,W]$.
\end{proposition}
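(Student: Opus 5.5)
The plan is to prove by induction along any trajectory that every state ${\bf x}\in\mathcal{Q}$ satisfies ${\bf x}\le {\bf d}$ and $f_v({\bf x})\le x_v$ for every $v$. Then the $[{\bf t},{\bf p}]$-system, restricted to $\mathcal{Q}$, behaves as an M-C system, and Proposition~\ref{main-thm-1} gives the common fixed point.

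\textbf{Monotonicity of $f_v$.} First, $f_v$ is monotonically increasing in the neighbor values, independently of $\mathcal{Q}$. If some $k$ is feasible for a value multiset (at least $k$ neighbors with values $\ge t_v(k)$ and at least $p_v(k)$ neighbors with values $\ge k$), then raising any neighbor values keeps $k$ feasible. Hence the maximum feasible $k$ cannot decrease. Also $k=0$ is always feasible under the standing assumption that $p_v(0)\le 0$ or is otherwise satisfied; I will interpret the convention as making $f_v$ well defined. In any case, $f_v\le \Deg(v)$, since $k$ feasible requires $k$ neighbors.

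\textbf{Base case.} Note $f_v({\bf d})\le \Deg(v)=d_v$, so the contractive inequality holds at ${\bf d}$.

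\textbf{Inductive step.} Let ${\bf x}^{(j)}$ be a trajectory from ${\bf d}$ under a fair schedule $W$. I claim ${\bf x}^{(j+1)}\le {\bf x}^{(j)}$ for all $j$, and prove it by induction. For $j=0$, updated coordinates become $f_v({\bf d})\le d_v$ and the others are unchanged. Suppose ${\bf x}^{(j)}\le {\bf x}^{(j-1)}$. For $v\in W_{j+1}$, monotonicity gives $x^{(j+1)}_v=f_v({\bf x}^{(j)})\le f_v({\bf x}^{(j-1)})$. I then need $f_v({\bf x}^{(j-1)})\le x^{(j)}_v$, and I split into two cases.
\begin{itemize}
\item If $v$ was updated at some last time $s\le j$, then $x^{(j)}_v=f_v({\bf x}^{(s-1)})\ge f_v({\bf x}^{(j-1)})$, since ${\bf x}^{(j-1)}\le {\bf x}^{(s-1)}$ by the inductive chain.
\item If $v$ was never updated, then $x^{(j)}_v=d_v\ge f_v(\cdot)$.
\end{itemize}
Therefore $x^{(j+1)}_v\le x^{(j)}_v$. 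This also gives $f_v({\bf x}^{(j)})\le x^{(j)}_v$ for every $v$ and $j$, by the same case argument applied with $j$ in place of $j-1$. That is exactly contractivity of each $f_v$ at every state of $\mathcal{Q}$. The bookkeeping of the ``last update time'' is the main obstacle: contractivity is not global, only along these monotone trajectories, so the induction must carry the pointwise decreasing property rather than contractivity alone.

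\textbf{Common fixed point.} Each trajectory is pointwise nonincreasing in the finite set $P^n$, so it stabilizes. By fairness, the limit ${\bf z}_W$ is a fixed point. To show independence from $W$, I would adapt the proof of Proposition~\ref{main-thm-1}. Define $\tilde f_v({\bf x})=\min\{x_v,f_v({\bf x})\}$, which is monotone and contractive everywhere, so $[G,\tilde f,W]$ is an M-C system. On $\mathcal{Q}$ the systems $f$ and $\tilde f$ coincide by the contractivity just shown. Hence trajectories from ${\bf d}$ coincide, and all reach the same fixed point ${\bf z}$ by Proposition~\ref{main-thm-1}.

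For an arbitrary state ${\bf y}\in\mathcal{Q}$, we have ${\bf z}\le{\bf y}\le{\bf d}$. This holds because ${\bf y}$ lies on a decreasing trajectory whose limit is ${\bf z}$, so ${\bf z}\le {\bf y}$. By the second part of Proposition~\ref{main-thm-1} applied to $\tilde f$, the state ${\bf y}$ reaches ${\bf z}$ under $\tilde f$. Finally, fixed points of $\tilde f$ and $f$ below ${\bf d}$ agree on such trajectories: since ${\bf y}\le{\bf d}$, the $\tilde f$-trajectory from ${\bf y}$ stays in states where $\tilde f=f$, by the same induction started at ${\bf y}$. So ${\bf y}$ reaches ${\bf z}$ in every $[{\bf t},{\bf p}]$-system $[G,f,W]$ with $W$ fair.
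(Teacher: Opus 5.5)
Your proof is correct and follows the paper's skeleton: induct along trajectories from ${\bf d}$ to get contractivity on $\mathcal{Q}$, then appeal to Proposition~\ref{main-thm-1}. There are two differences worth noting.

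First, the paper's induction hypothesis is only contractivity of every $f_u$ at the current state ${\bf x}^{(i)}$. This gives ${\bf x}^{(i+1)}\le{\bf x}^{(i)}$ at once, hence $f_v({\bf x}^{(i+1)})\le f_v({\bf x}^{(i)})\le x^{(i)}_v$. The two cases $v\in W_{i+1}$ and $v\notin W_{i+1}$ then give contractivity at ${\bf x}^{(i+1)}$. So your remark that the induction ``must carry the pointwise decreasing property rather than contractivity alone'' is mistaken; your last-update-time bookkeeping is valid but unnecessary.

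Second, for the common fixed point you are more rigorous than the paper. The paper only says the system is M-C ``with respect to $\mathcal{Q}$'' and cites Propositions~\ref{main-thm-1} and~\ref{main-thm-2}, although Proposition~\ref{main-thm-1} is stated for systems contractive at every state. Your $\tilde f_v({\bf x})=\min\{x_v,f_v({\bf x})\}$ is a genuine M-C system agreeing with $f$ on $\mathcal{Q}$, which is exactly what makes that citation legitimate.

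One justification in your last step needs fixing. The condition ${\bf y}\le{\bf d}$ does not imply $f_v({\bf y})\le y_v$: the $[{\bf t},{\bf p}]$-function $f_v$ ignores $x_v$, so lowering $y_v$ alone can push it below $f_v({\bf y})$. What your restarted induction needs as its base is $f_v({\bf y})\le y_v$ for all $v$. This holds because ${\bf y}\in\mathcal{Q}$, and then the ``never updated'' case reads $x_v=y_v\ge f_v({\bf y})\ge f_v({\bf x})$ for ${\bf x}\le{\bf y}$.

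There is also a simpler route. If ${\bf y}=[G,f,W]^{(i)}({\bf d})$ and $W'$ is fair, then the schedule $W_1\cdots W_iW'_1W'_2\cdots$ is fair. Its trajectory from ${\bf d}$ passes through ${\bf y}$ and continues as the $W'$-trajectory from ${\bf y}$. So that trajectory lies in $\mathcal{Q}$ and reaches ${\bf z}$ by what you already proved, and the second part of Proposition~\ref{main-thm-1} is not needed.

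Finally, $k=0$ is feasible at every state exactly by the standing assumption $p_v(0)\le\Deg(v)$, since all values are non-negative. This settles your hedge about $f_v$ being well defined.
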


\begin{proof}
	For any fixed update schedule $W=W_1W_2\cdots$ and $v \in G$, we will show that $f_v$
	is contractive with respect to $[G,f,W]^{(i)}({\bf d})$ for all $i\geq 0$ by induction.
	Note that $[G,f,W]^{(0)}({\bf d})={\bf d}$.
	By definition, when updating $v$ from ${\bf d}$, the returned state is obviously no larger than $\Deg(v)$.
	Thus, $f_v$ is contractive w.r.t.~${\bf d}$.
	
	Suppose $f_u$ is contractive w.r.t.~$[G,f,W]^{(i)}({\bf d})$ for any $u\in G$.
	Consequently, there holds
	$$
	[G,f,W]^{(i+1)}({\bf d}) \leq [G,f,W]^{(i)}({\bf d}),
	$$
	no matter which vertices are contained in $W_{i+1}$.
	Note that although the local function $f_v$ only uses the states of the neighbors of $v$ as the input,
	it can be easily viewed as (or extended to) a function taking the states of all vertices (i.e., a system state)
	as the input, and we will still denote the ``extended" function by $f_v$ if no confusions arise.
	It is also not hard to realize that $f_v$ is monotonically increasing.
	Thus,
	$$
	f_v([G,f,W]^{(i+1)}({\bf d}) )\leq f_v([G,f,W]^{(i)}({\bf d})) \leq [G,f,W]^{(i)}({\bf d})[v],
	$$
	where ${\bf x}[v]$ indicates the $v$-th coordinate in ${\bf x}$.
	We next distinguish two cases.
	If $v\notin W_{i+1}$, then we clearly have 
	$$
		[G,f,W]^{(i+1)}({\bf d})[v] = [G,f,W]^{(i)}({\bf d})[v].
	$$
	The last two relations then imply that $f_v$ is contractive w.r.t.~$	[G,f,W]^{(i+1)}({\bf d})$.
	If $v \in W_{i+1}$, we have 
	$$	
	[G,f,W]^{(i+1)}({\bf d})[v]=f_v([G,f,W]^{(i)}({\bf d})).
	$$
	Together with the third last relation, they also imply $f_v$ is contractive w.r.t.~$	[G,f,W]^{(i+1)}({\bf d})$.
	Hence, with respect to the portion of system states space $\mathcal{Q}$,
	the \tp-system is an M-C system.
	In view of Proposition~\ref{main-thm-1} and~\ref{main-thm-2}, the last statement follows,
	completing the proof.
	\end{proof}

\begin{theorem}[Structure profile/Dynamics Correspondence]\label{thm:main3}

	Suppose $G$ is a graph on $[n]$. Let ${\bf d}=\big(\Deg(1),\Deg(2),\dots, \Deg(n)\big)$.
	Then, in the
	\tp-system on $G$ with an arbitrary fair update schedule, the state ${\bf d}$ is reaching a stable state $C^{\bf t,p}=(C^{\bf t,p}_1,C^{\bf t,p}_2,\ldots, C^{\bf t,p}_n)$, and for any $i \in G$, 
	$$
	C^{\bf t,p}_i= C_{\bf t, p}(i).
	$$

\end{theorem}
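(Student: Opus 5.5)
The proof has two parts. We show that the limit $C^{\bf t,p}$ dominates the rank vector from below, and that it is itself realized by a \tp-filtered chain. Write $C=C^{\bf t,p}$. Existence and independence of the fixed point reached from ${\bf d}$ come from the preceding proposition, so only the identification $C_i=C_{\bf t,p}(i)$ needs proof.

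\textbf{Step 1 (the fixed point yields a chain).} Define $H_k$ as the subgraph induced by $\{v: C_v\geq k\}$, for $0\leq k\leq \max_v C_v$. We check that $H_0,H_1,\ldots$ is a \tp-filtered chain. Take $v\in H_i$, so $C_v\geq i$. Since $C$ is a fixed point, $f_v(C)=C_v=:k\geq i$. By the definition of $f_v$, $v$ has at least $k$ neighbors with value at least $t_v(k)$ and at least $p_v(k)$ neighbors with value at least $k$. Monotonicity of $t_v,p_v$ then gives:
\begin{itemize}
\item at least $i$ neighbors with value at least $\max\{0,t_v(i)\}$, i.e.\ in $H_{\max\{0,t_v(i)\}}$;
\item at least $p_v(i)$ neighbors with value at least $i$, i.e.\ in $H_i$.
\end{itemize}
Values are non-negative, so a negative threshold is harmless. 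This must be combined with the fact that the maximal chain is nested and contains every chain entrywise; that follows from the merging argument behind the uniqueness proposition together with the nesting construction $G'_i=\bigcup_{j\geq i}G_j$. Hence each vertex with $C_v\geq k$ lies in $G_k$ of the maximal chain, so $C_v\leq C_{\bf t,p}(v)$.

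\textbf{Step 2 (the dynamics never drops below the ranks).} Let $r_v=C_{\bf t,p}(v)$ and let $G_0\geq G_1\geq\cdots$ be the maximal chain. We show by induction on time that every state ${\bf x}$ along the trajectory from ${\bf d}$ satisfies ${\bf x}\geq r$. At time $0$, $v\in G_{r_v}$ has at least $r_v$ neighbors in some $G_j$, so $\Deg(v)\geq r_v$. For the inductive step, suppose ${\bf x}\geq r$ and $v$ is updated. Since $v\in G_{r_v}$, it has at least $r_v$ neighbors in $G_{\max\{0,t_v(r_v)\}}$, and each such neighbor $u$ has $x_u\geq r_u\geq t_v(r_v)$. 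It also has at least $p_v(r_v)$ neighbors in $G_{r_v}$, each with value at least $r_v$. So $k=r_v$ is admissible in the maximum defining $f_v$, giving $f_v({\bf x})\geq r_v$. Thus the limit satisfies $C\geq r$, and together with Step 1 we get $C=r$.

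\textbf{Main obstacle.} The delicate point is Step 1's appeal to maximality: we need that \emph{every} \tp-filtered chain is entrywise contained in the maximal one. The definition of maximal only compares sizes and then entries, so this is not immediate. I would prove it directly. Given any chain, take its union with the maximal chain entrywise and then apply the nesting construction; the result is again a chain. If it strictly exceeded the maximal chain in size or in some entry, that would contradict maximality. Hence it equals the maximal chain, which gives the containment.

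A minor additional check is that states live in $[n]$ while $\Deg(v)\leq n-1$. The maximum in $f_v$ is over a nonempty set, since $k=0$ is admissible provided $p_v(0)\leq \Deg(v)$ and we count neighbors with value at least $0$. So all values stay within range.
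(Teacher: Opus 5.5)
Your proof is correct, but it takes a different route from the paper's.

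\textbf{The paper's route.} The paper works with the rank vector ${\bf z}=(C_{\bf t,p}(1),\ldots,C_{\bf t,p}(n))$ in three steps.
\begin{enumerate}
\item It shows ${\bf z}$ is an exact fixed point. The bound $f_v({\bf z})\geq z_v$ comes from $v\in G_{z_v}$. The bound $f_v({\bf z})<z_v+1$ holds because otherwise $v$ could be added to $G_{z_v+1}$, contradicting maximality.
\item It shows that no fixed point ${\bf y}\leq{\bf d}$ lies above ${\bf z}$ or is incomparable to it. For this it builds a chain from ${\bf y}$ by an iterative closure $S_0\supseteq S_1\supseteq\cdots$ seeded at a vertex with $y_v>z_v$.
\item It invokes the cited Propositions~\ref{main-thm-1} and~\ref{main-thm-2} to conclude that ${\bf d}$ must reach ${\bf z}$.
\end{enumerate}

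\textbf{Your route.} You instead sandwich the actual limit $C$ between ${\bf z}$ from both sides.
\begin{itemize}
\item The level sets $\{v: C_v\geq k\}$ of the fixed point form a chain, so $C\leq{\bf z}$. This is the paper's Claim~2, but your level-set chain is simpler than the closure construction; the paper's $S_r$ is in fact a subchain of it, since $u\in S_r$ forces $y_u\geq r$.
\item The trajectory from ${\bf d}$ never drops below ${\bf z}$, because $f_v({\bf x})\geq z_v$ whenever ${\bf x}\geq{\bf z}$. This is the paper's Claim~1(i), turned into an induction along the trajectory.
\end{itemize}

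\textbf{What your route buys.} You never need part~(ii) of Claim~1, nor the black-box Propositions~\ref{main-thm-1} and~\ref{main-thm-2}. You only use that the non-increasing trajectory from ${\bf d}$ stabilizes at a fixed point, which needs contractivity, finiteness and fairness. Independence from the fair schedule then comes for free, since the limit equals ${\bf z}$ for every fair schedule. Applying your Step~1 to an arbitrary fixed point also recovers the extra fact implicit in the paper's argument, that ${\bf z}$ dominates every fixed point.

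\textbf{Chain containment.} Both proofs rely on every \tp-filtered chain being entrywise contained in the maximal one. The paper uses this silently when it writes ``$C_{\bf t,p}(v)\geq y_v$ since $v\in S_{y_v}$''. You state it and prove it via entrywise union plus maximality, which is the right repair. For entry $0$, note that maximality only compares entries $i\geq 1$, so there you use $G_0=G$ from the nesting proposition instead.
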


\begin{proof}
	Let ${\bf z}=\big( C_{{\bf t},{\bf p}}(1)\ldots, C_{{\bf t},{\bf p}}(n)\big)$.
	Then, in view of Proposition~\ref{main-thm-1} and~\ref{main-thm-2}, it suffices to prove that
	the state ${\bf z}$ is a fixed point of the $[{\bf t},{\bf p}]$-system
	and the state ${\bf d}$ is reaching ${\bf z}$.
	We shall first prove:
	
	{ \emph{Claim}~$1$.} The state ${\bf z}$ is a fixed point of the $[{\bf t},{\bf p}]$-system.

	Let $\mathcal{C}: G_0\geq G_1\geq \cdots $ be the maximal $[{\bf t}, {\bf p}]$-separate chain of $G$.
	First, suppose $C_{{\bf t},{\bf p}}(v)=i$ for a vertex $v$. By definition, $v$ belongs to $G_i$ but not $G_{i+1}$,
	which tells us:
	\begin{itemize}
		\item[ (i)] there exist $i$ neighbors of $v$ contained in $G_{j}$ ($j=\max\{0, t_v(i)\}$) and $p_v(i)$ neighbors of $v$ contained in $G_i$. Note that for any $u$ among these said neighbors in $G_j$, we have $C_{{\bf t},{\bf p}}(u)\geq t_v(i)$, and for any $u'$ among these neighbors contained in $G_i$ we have $C_{{\bf t},{\bf p}}(u')\geq i$ by definition. Thus, among the neighbors of $v$, there exist at least $i$ of them with values at least $t_v(i)$ and at least $p_v(i)$ of them with values at least $i$ in ${\bf z}$. This leads to that $f_v({\bf z})\geq i=C_{{\bf t},{\bf p}}(v)$;
		\item[(ii)] it is impossible that $i+1$ neighbors of $v$ are contained in $G_{j'}$ ($j'=\max\{0, t_v(i+1)\}$) and $p_v(i+1)$ neighbors of $v$ are contained in $G_{i+1}$. Otherwise, the chain $G_0\geq \cdots \geq G_i\geq G_{i+1}\bigcup \{v\}\geq G_{i+2}\geq \cdots$ yields a $[{\bf t}, {\bf p}]$-filtered  chain, which contradicts the maximality of $\mathcal{C}$. Hence, the case that $i+1$ of the neighbors of $v$ have values at least $t_v(i+1)$ and $p_v(i+1)$ of the neighbors with values at least $i+1$ in ${\bf z}$ cannot happen. This implies
		$f_v({\bf z})< i+1$.
	\end{itemize}
	From (i) and (ii), we conclude $f_v({\bf z})=i=C_{{\bf t},{\bf p}}(v)$ for any $v$. Therefore, ${\bf z}$ is a fixed point.
	
	We next shall show that ${\bf d}$ is reaching ${\bf z}$. If ${\bf z=d}$, we are done.
	Otherwise we clearly have ${\bf z<d}$.
	For this case, in the light of
	Proposition~\ref{main-thm-2}, it suffices to show that any state ${\bf y\leq d}$ such that ${\bf y>z}$ or ${\bf y}$ is uncomparable with ${\bf z}$ is not a fixed point.
	
	We prove by contradiction.
	Suppose ${\bf y}$ is such a state which is a fixed point. Then there must be a coordinate indexed by some $v$ that satisfies $y_v> C_{{\bf t},{\bf p}}(v)$. Consider the sequence of subgraphs induced by the sequence of sets of vertices $S_0 \supseteq S_1\supseteq \cdots \supseteq S_{m}$ for a sufficient large number $m$ which are iteratively constructed as follows:
	\begin{itemize}
		\item[(1)] set $S_{y_v}=\{v\}$ and $S_r=\varnothing$ for $r\neq y_v$;
		\item[(2)] for $r$ from $0$ to $m$, if $u\in S_r$, then set
		\begin{align*}
			 S_{\max\{0,t_u(r)\}}&=S_{\max\{0,t_u(r)\}}\bigcup 
			\{w: \mbox{$w$ is a neighbor of $u$ and $y_w\geq t_u(r)$}\},\\
			 S_{r}&=S_{r}\bigcup \{w: \mbox{$w$ is a neighbor of $u$ and $y_w\geq r$}\},
		\end{align*}
		and for $0\leq r \leq m$, set $S_r=\bigcup_{j=r}^m S_j$;
		\item[(3)] iterate (2) until the sequence  $S_0 \supseteq S_1\supseteq \cdots \supseteq S_{m}$ becomes stable, i.e., $S_r$ does not change for any $0\leq r \leq m$ when further executing (2).
	\end{itemize}
	Clearly, by construction we have $S_{r}\subseteq S_{r-1}$.
	By abuse of notation, we denote by $S_r$ the subgraph induced by $S_r$ as well. Then we have a chain of graphs $S_0\geq S_1\geq \cdots \geq S_{m}$.
	Let $\tilde{{\bf t}}$ be the restriction of ${\bf t}$ to that of the vertices in $S_0$ and $\tilde{{\bf p}}$ be the restriction of ${\bf p}$ to that of the vertices in $S_0$.
	We proceed to show that
	
	{\emph{Claim}~$2$.} The chain $S_0\geq S_1\geq \cdots \geq S_{m}$ gives a $[\tilde{{\bf t}},\tilde{{\bf p}}]$-filtered chain of the graph $S_0\leq G$. That is, for any $u\in S_0$, if $u\in S_r$, then there are $r$ neighbors of $u$ contained in $S_{j}$ where $j=\max\{0, t_u(r)\}$ and $p_u(r)$ neighbors of $u$ contained in $S_r$.

	Since ${\bf y}$ is a fixed point by assumption, there exist $y_u$ neighbors of $u$
	whose corresponding values in ${\bf y}$ are at least $t_u(y_u)$ and $p_u(y_u)$ neighbors whose corresponding values in ${\bf y}$ are at least $y_u$, but there are not $y_u+1$ neighbors of $u$
	whose corresponding values in ${\bf y}$ are at least $t_u(y_u+1)$ and $p_u(y_u+1)$ neighbors whose corresponding values in ${\bf y}$ are at least $y_u+1$.
	By construction of (2), if $u\in S_r$, then $r\leq y_u$.
	Furthermore, since $u$ has at least $y_u$ neighbors $w$ such that $y_w \geq t_u(y_u) \geq t_u(r)$,
	these vertices $w$ are contained in $S_{\max\{0, t_u(r)\}}$. Analogously, there are at least $p_u(y_u)\geq p_u(r)$
	neighbors of $u$ contained in $S_r$.
	Accordingly, the chain $S_0\geq S_1\geq \cdots \geq S_{m}$ gives a $[\tilde{{\bf t}},\tilde{{\bf p}}]$-separate chain, whence Claim~$2$.

	In view of Claim~$2$, we have $C_{\bf t,p}(v)\geq y_v$ since $v\in S_{y_v}$, which yields a contradiction.
	Hence, ${\bf y}$ cannot be a fixed point.
	According to Proposition~\ref{main-thm-2}, ${\bf d}$ cannot reach a fixed point that is smaller than ${\bf z}$ either.
	Therefore, ${\bf d}$ is reaching ${\bf z}$, and the proof follows.
\end{proof}

According to Theorem~\ref{thm:main3}, the maximal \tp-separate chain if needed can be immediately constructed once the fixed point of the \tp-system is obtained.
It is also worthy of pointing out that the computation can be implemented distributively since the vertices are not required to synchronously update their states
when searching the fixed point.

For $	{\bf t}=(t_1, t_2, \ldots, t_n)$ and $	{\bf t'}=(t'_1, t'_2, \ldots, t'_n)$,
we write ${\bf t} \leq {\bf t'}$ if $t_v(i) \leq t'_v(i)$ for all $v$ and $i$.
Then, the following proposition holds and its proof is left to the interested reader.

\begin{proposition}
	Let $G$ be a graph of $n$ vertices. Suppose ${\bf t} \leq {\bf t'}$ and ${\bf p} \leq {\bf p'}$.
	Then, we have
	$
	C^{\bf t',p'} \leq  C^{\bf t,p}.
	$
	Moreover, $C^{\bf t,p}$ is reaching $C^{\bf t',p'}$ in the $[{\bf t',p'}]$-system of $G$.
\end{proposition}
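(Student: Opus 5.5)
The plan is to work mostly on the chain side and then transfer to the dynamics through Theorem~\ref{thm:main3}.

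\textbf{Step 1: the inequality $C^{\bf t',p'} \leq C^{\bf t,p}$.} By Theorem~\ref{thm:main3}, $C^{\bf t,p}_v = C_{\bf t,p}(v)$ and $C^{\bf t',p'}_v = C_{\bf t',p'}(v)$, so it suffices to compare ranks in maximal chains. Take the maximal $[{\bf t'},{\bf p'}]$-filtered chain $G'_0\geq G'_1\geq\cdots$ and show it is also a \tp-filtered chain. Let $v\in G'_i$.
\begin{itemize}
\item[(a)] $v$ has at least $p'_v(i)\geq p_v(i)$ neighbors in $G'_i$.
\item[(b)] $v$ has at least $i$ neighbors in $G'_{j'}$ with $j'=\max\{0,t'_v(i)\}$. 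Since $j=\max\{0,t_v(i)\}\leq j'$ and the chain is nested, $G'_{j'}\leq G'_j$, so $v$ has at least $i$ neighbors in $G'_j$.
\end{itemize}
The maximal \tp-filtered chain dominates every \tp-filtered chain entrywise, by the merging argument behind the Uniqueness proposition. Hence $G'_i\leq G_i$ for all $i$, which gives $C_{\bf t',p'}(v)\leq C_{\bf t,p}(v)$. The existence hypothesis $p'_v(0)\leq\Deg(v)$ is assumed and implies $p_v(0)\leq \Deg(v)$.

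\textbf{Step 2: the reachability statement.} Write ${\bf z}'=C^{\bf t',p'}$ and ${\bf z}=C^{\bf t,p}$. By Step 1 and the trivial bound ${\bf z}\leq{\bf d}$ (ranks never exceed degree, since $v\in G_i$ requires $i$ neighbors), we have ${\bf z}'\leq{\bf z}\leq{\bf d}$. Theorem~\ref{thm:main3} says ${\bf d}$ is reaching ${\bf z}'$ in the $[{\bf t',p'}]$-system. To conclude that ${\bf z}$ is reaching ${\bf z}'$, I would invoke the second part of Proposition~\ref{main-thm-1}: any ${\bf y}$ with ${\bf z}'\leq{\bf y}\leq{\bf d}$ reaches ${\bf z}'$.

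\textbf{Main obstacle.} That proposition is stated for M-C systems, whereas the $[{\bf t',p'}]$-system is only known to be contractive on the set $\mathcal{Q}$ of states reachable from ${\bf d}$. I therefore need contractivity along the orbit of ${\bf z}$, and the key point is to show $f'_v({\bf z})\leq z_v$ for all $v$. For this, note that the local function is monotone in $({\bf t},{\bf p})$: if $k$ neighbors have values at least $t'_v(k)\geq t_v(k)$ and $p'_v(k)\geq p_v(k)$ of them have values at least $k$, then the \tp-condition holds at $k$ as well. Hence $f'_v({\bf z})\leq f_v({\bf z})=z_v$, the equality because ${\bf z}$ is a \tp-fixed point.

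Contractivity then propagates along any fair schedule by the same induction as in the preceding proposition, using monotonicity of $f'_v$: states only decrease, and decreasing inputs keep $f'_v$ below the current value. So the orbit of ${\bf z}$ decreases and stays at or above ${\bf z}'$, since ${\bf z}'$ is a fixed point and the $f'_v$ are monotone. On this region the system is M-C. If the direct appeal to Proposition~\ref{main-thm-1} is not cleanly applicable, I would argue directly instead. The orbit of ${\bf z}$ is a decreasing sequence in a finite set bounded below by ${\bf z}'$, so it stabilizes at a fixed point ${\bf w}$ with ${\bf z}'\leq{\bf w}\leq{\bf d}$. By Proposition~\ref{main-thm-2} applied to ${\bf d}$ reaching ${\bf z}'$, no fixed point lies strictly between ${\bf z}'$ and ${\bf d}$ other than ${\bf z}'$ itself, so ${\bf w}={\bf z}'$.
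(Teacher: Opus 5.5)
Your proof is correct. The paper gives no argument for this proposition (it is left to the reader), so there is no official route to compare against; your proof fills that gap.

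\textbf{Step 1.} This step is sound. Nesting of the maximal $[{\bf t'},{\bf p'}]$-chain turns $\max\{0,t_v(i)\}\le\max\{0,t'_v(i)\}$ into the containment you need. The merging argument does show that the maximal $[{\bf t},{\bf p}]$-chain contains every $[{\bf t},{\bf p}]$-chain level-wise.

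\textbf{Step 2.} You rightly flag that Proposition~\ref{main-thm-1} is only stated for M-C systems. Your direct fallback closes the gap: the orbit from ${\bf z}$ decreases and is bounded below by ${\bf z}'$, so it stabilizes; fairness makes the limit ${\bf w}$ a fixed point, a use of fairness worth stating explicitly. Then ${\bf w}={\bf z}'$ by Proposition~\ref{main-thm-2}, read, as you do, with ${\bf z}'$ itself excluded.

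\textbf{A shorter route.} The pointwise inequality $f'_v\le f_v$ that you isolate already gives both parts, by coupling orbits under one common fair schedule $W$. Since $f'_v({\bf a})\le f'_v({\bf b})\le f_v({\bf b})$ whenever ${\bf a}\le{\bf b}$, induction gives $[G,f',W]^{(i)}({\bf d})\le[G,f,W]^{(i)}({\bf d})$ for all $i$. Hence ${\bf z}'\le{\bf z}$ follows with no chain argument. Moreover, each map $[G,f',W]^{(i)}$ is order-preserving, so
\[
{\bf z}'=[G,f',W]^{(i)}({\bf z}')\le[G,f',W]^{(i)}({\bf z})\le[G,f',W]^{(i)}({\bf d}).
\]
The right-hand side equals ${\bf z}'$ for large $i$ by Theorem~\ref{thm:main3}. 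So the orbit of ${\bf z}$ is squeezed onto ${\bf z}'$, and you need neither to propagate contractivity along it nor to invoke Proposition~\ref{main-thm-2}.

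\textbf{What each route buys.} Your chain-side Step~1 gives a structural fact the sandwich argument does not: the maximal $[{\bf t'},{\bf p'}]$-chain is itself a $[{\bf t},{\bf p}]$-chain. The sandwich route is shorter and relies only on monotonicity and Theorem~\ref{thm:main3}.
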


Finally, \tp-filtered chains of a graph can be generalized. For instance, we may
consider \tp-filtered chains of distance $l$, where we replace the requirements of neighbors
with that of distance $l$ neighbors. The details are omitted here.


\section*{Declarations}

\noindent{\bf Conflict of Interest:} None.


\end{document}